%

\documentclass[11pt]{article}
\usepackage{latexsym,color,amsmath,amsthm,amssymb,amscd,amsfonts}

\setlength{\textwidth}{6.0in} \setlength{\evensidemargin}{0.25in}
\setlength{\oddsidemargin}{0.25in} \setlength{\textheight}{9.0in}
\setlength{\topmargin}{-0.5in} \setlength{\parskip}{2mm}
\setlength{\baselineskip}{1.7\baselineskip}

\newtheorem{lemma}{Lemma}

\newtheorem{remark}[lemma]{Remark}

\newtheorem*{remark*}{Remark}



\def\upddots{\mathinner{\mkern 1mu\raise 1pt \hbox{.}\mkern 2mu
\mkern 2mu \raise 4pt\hbox{.}\mkern 1mu \raise 7pt\vbox {\kern 7
pt\hbox{.}}} }

\newcommand{\msl}{\widetilde{{G}}}
\newcommand{\mb}{\widetilde{{B}}}
\newcommand{\mm}{\widetilde{{M}}}
\newcommand{\mh}{\widetilde{{H}}}
\newcommand{\mgl}{\widetilde{{GL_2(\F)}}}
\newcommand{\mslt}{{\widetilde{{G}}}}

\newcommand{\glt}{{GL_2(\F)}}

\newcommand{\slt}{{SL_2}(\F)}

\newcommand{\F}{F}

\newcommand{\Of}{\mathbb O_{\F}}
\newcommand{\Pf}{\mathbb P_{\F}}

\newcommand{\N}{\mathbb N}

\newcommand{\half}{\frac 1 2}

\newcommand{\ab} {|\!|}

\newcommand{\Q}{\mathbb Q}
\newcommand{\C}{\mathbb C}

\def\>{\rangle}
\def\<{\langle}

\newtheorem{lem}{Lemma}
\newtheorem{thm}{Theorem}
\newtheorem{cor}{Corollary}

\newtheorem{prop}{Proposition}

\def\dotunion{
\def\dotunionD{\bigcup\kern-9pt\cdot\kern5pt}
\def\dotunionT{\bigcup\kern-7.5pt\cdot\kern3.5pt}
\mathop{\mathchoice{\dotunionD}{\dotunionT}{}{}}} \setcounter
{section} {0}

\newtheorem*{theorem*}{Theorem}
\newtheorem*{lemma*}{Lemma}

\begin{document}

\title{Whittaker spaces for reducible unitary principal series representations of $\widetilde{SL_2(F)}$}
\author{Dani Szpruch}
 \maketitle
\begin{abstract}
Let $F$ be a $p$-adic field containing the full group of $n^{th}$ roots of 1 and let $\msl$ be the $n$-fold cover of $SL_2(F)$ constructed by Kubota. In this paper we compute the dimension of the space of Whittaker functionals of the two irreducible summands  inside a reducible unitary genuine principal series representation of $\msl$. We also show how these dimensions change when the  Whittaker character is modified. As an application we determine the action of the twisted Kazhdan-Patterson  $n$-fold cover of  $GL_2(F)$ on the two summands. We emphasize that our main results addresses both ramified and unramified representations and do not rely on the assumption that the cover is tame.
\end{abstract}
\section{Introduction}
Let $G$ be a quasi-split reductive group defined over a $p$-field $F$. Fix a Borel subgroup ${ B}={ H} \ltimes { N}$ and a non-degenerate character $\psi$ of $N$. Let $M$ be a Levi subgroup of $G$ containing $H$ and let $\psi'$ be the restriction of $\psi$ to $N\cap M$.

Let $\sigma$ be a smooth admissible  irreducible  representation of $M$ and let $Wh_{\psi'}(\sigma)$ be the space of $\psi'$-Whittaker functionals on $\sigma$. $\sigma$ is called $\psi'$-generic if  $Wh_{\psi'}(\sigma)$ is non-trivial. The well known uniqueness of Whittaker model stats that if $\sigma$ is $\psi'$-generic then $\dim Wh_{\psi'}(\sigma) = 1$. Let $\pi=I(\sigma)$ be the representation of $G$ parabolically induced from $\sigma$. Rodier Heredity  implies that if $\sigma$ is $\psi'$-generic then $\dim Wh_\psi(\pi)=1$. In particular, if $\pi$ is reducible then exactly one of its irreducible constituents is $\psi$-generic.

This situation is changed dramatically when we move to covering groups. For these groups, uniqueness of Whittaker model rarely holds, see \cite{GSS}. In recent years the problem of determining  the space of Whittaker functionals of various representations of covering groups has been studied extensively. See for example \cite{Gao16}, \cite{Gao2021},  \cite{GaoGurKar2} \cite{GSS3}, \cite{Zou}.

If the uniqueness of Whittaker model fails for a certain covering group, it fails already for its genuine principal series representations. The goal of this paper is to determine the dimensions of the space of Whittaker functionals for the irreducible constitutions of reducible unitary genuine principal series representations of $\msl=\widetilde{SL_2(F)}$, the $n$-fold cover of $SL_2(F)$ defined in \cite{Kub}. We emphasize that our results apply to both ramified and unramified representations and to any Whittaker character. We also  emphasize  that we do not rely on the assumption that the cover is tame. Namely, we do not assume that $gcd(n,p)=1$.

Such reducible representations of $\msl$ exists only if $n$ is odd, see \cite{Sz19}, Proposition 5.5 and Section 5.7. Similar to the linear case, the set of
reducible unitary genuine principal series representations of $\msl$ is parameterized by the set of non-trivial quadratic characters of $F^*$, see Section \ref{planchsec} below.

Let $\pi$ be a reducible genuine unitary principal series representation of $\msl$. It is a sum of two non-isomorphic irreducible representations. Since
$\dim Wh_\psi(\pi)$ is odd we may write $\pi=\pi^{+}_\psi \oplus \pi^{-}_\psi$ where the irreducible summands $\pi^{\pm}_\psi$ are defined by the relation
$$\dim Wh_\psi (\pi^{+}_\psi)>\dim Wh_\psi (\pi^{-}_\psi).$$
Our main result given in Section \ref{mainsec}, Theorem \ref{mainthm}, states that

$$\dim Wh_\psi (\pi^{+}_\psi)=\ab n \ab^{-\half} \frac{n+1}{2}, \, \, \dim Wh_\psi (\pi^{-}_\psi)=\ab n \ab^{-\half} \frac{n-1}{2}.$$

Here $\ab \cdot \ab$ is the usual $p$-adic absolute value. Moreover, if one changes the Whittaker character to $\psi_c$ where $c \in F^*$ (see the body of the paper for details) then

$$\dim Wh_{\psi_c} (\pi^{+}_\psi)=\ab n \ab^{-\half} \frac{n+\theta(c)}{2}, \, \, \dim Wh_{\psi_c} (\pi^{-}_\psi)=\ab n \ab^{-\half} \frac{n-\theta(c)}{2}.$$
Here $\theta$ is the non-trivial quadratic character of $F^*$ associated with $\pi$.

We now give an outline of the proof of Theorem 1: first we normalize the standard intertwining operator by a suitable $\gamma$-factor so that the normalized
operator is a $\msl$ isomorphism of order 2. We then use the normalized operator to construct the projections from $Wh_\psi(\pi)$ to $Wh_\psi(\pi_\psi^{\pm})$. The last step in the proof is a computation of the rank of these projections. The computation ultimately boils down to a computation of the trace of a certain scattering matrix. A by-product of our proof is that $\pi_\psi^{\pm}$ are identified as the $\pm 1$ eigenspaces of the normalized intertwining operator.

Our results in the unramified case are among Gao's results in \cite{Gao2021}. The normalization factor in that work of Gao is the one coming from the Gindikin-Karpelevich formula and his computation of the trace uses the explicit description of the scattering matrices obtained by McNamara in \cite{Mc2}. Both Gindikin-Karpelevich formula and McNamara's work are applicable for unramified representations only. In fact McNamara assumes that the Whittaker character is normalized.

Our first novelty is the  usage the of the explicit computation of the Plancherel measure in \cite{Sz19}, Theorem 5.7 to obtain the normalization factor, see Section \ref{planchsec} below. In the unramified case it agrees with Gao's. Our second novelty is the usage of the coarse description of the local coefficients matrices in \cite{Sz19}, Theorem 4.12 to compute the trace. Using the formalism from \cite{Sz19}, this last computation turns out to be very simple, in fact it is nothing but Fourier inversion for finite abelian groups, see Section \ref{scatsec}.

This paper is organized as follows. First, in Section \ref{presec} we introduce some known notions and results regarding  the $p$-adic field $F$ and the covering group $\msl$ along with some material on its principal series representations. Our main result is presented in Section \ref{mainresec}. We have included two remarks in the end of Section \ref{mainsec}: in Remark \ref{mainrem} we discuss the analogy to the linear case and  the relation to the theory of $R$-groups. Remark \ref{Sharem} addresses the relation between the normalization factor and the Shahidi-type invariant defined in \cite{GSS}.

In Section \ref{glsec} we use our  main result to determine the action of the Kazhdan-Patterson twisted $n$-fold cover of $GL_2(F)$ on $\pi_\psi^{\pm}$. Finally, in Section \ref{unramsec} we use this action along with our main result to study the unramified case.
\section{Preliminaries} \label{presec}
\subsection{The $p$-adic field} \label{padicsec}
Let $F$ be a finite extension of $\Q_p$. Denote by $q$ the cardinality of its residue field. Denote by $\Of$ its ring of integers. Fix $\varpi$, a generator of $\Pf$, the maximal ideal of $\Of$. We normalize the absolute value on $F$ such that $\ab \varpi \ab =q^{-1}$.

Let $\psi$ be a non-trivial character of $F$. We define $e(\psi)$, the conductor of $\psi$, to be the smallest integer $k$ such that $\psi$ is trivial on $\Pf^k$. For $c \in F^*$ let $\psi_c$ be the character of $F$ given by $x \mapsto \psi(cx)$. The map $\psi\mapsto \psi_c$ is an isomorphism from $F$ to its Pontryagin dual $\widehat{F}$. We denote by $d_\psi$ the $\psi$-self dual Haar measure on $F$. 

Let $\chi$ be a character of $F^*$. If $\chi$ is ramified we define its conductor, $e(\chi)$, to be the smallest  $k\in \N$ such that $\chi$ is trivial on $1+\Pf^k$. If $\chi$ is unramified we set $e(\chi)=0$. For $s\in \C$ we set $\chi_s=\ab \cdot \ab^s\chi.$ In this paper $\theta$ will denote a non-trivial quadratic character of $F^*$.

Tate $\gamma$-factor associated with $\chi$, $\psi$ and $s\in \C$ is defined by $$\gamma(s,\chi,\psi)=\epsilon(s,\chi,\psi)\frac{L(1-s,\chi^{-1})}{L(s,\chi)}$$
where $\epsilon(s,\chi,\psi)$ is a monomial function in $q^{-s}$ satisfying
\begin{eqnarray} \label{Tate gamma} {\epsilon}(1-s,\chi^{-1},\psi) &=& \chi(-1)\epsilon(s,\chi,\psi)^{-1}, \\
\label{changepsi}\epsilon(s,\chi,\psi_c) &=& \chi(c)\ab c \ab^{s-\half}\epsilon(s,\chi,\psi), \\
\label{epsilon old twist} \epsilon(s+t,\chi,\psi) &=& q^{e(\psi)-e(\chi)t}\epsilon(s,\chi,\psi),\end{eqnarray}
We note that if $e(\psi)=0$ and $\chi$ is unramified then $\epsilon(s,\chi,\psi)=1$. For all these assertions see Section 1 in \cite{Schmidt} for example.  Combining \eqref{Tate gamma} and \eqref{epsilon old twist} one obtains
\begin{equation} \label{epsilon twist and inv}
\epsilon(1-s,\chi^{-1},\psi)\epsilon(1+s,\chi,\psi)= \chi(-1)q^{e(\psi)-e(\chi)}.
\end{equation}

Fix an odd integer $n \in \N$. We shall assume in this paper that $F^*$ contains the full group of $n^{th}$ roots of 1. Denote this cyclic group by $\mu_n$. We identify $\mu_n$ with the group of $n^{th}$ roots of 1 in $\C^*$ and suppress this identification. Let $$( \cdot, \cdot):F^* \times F^* \rightarrow \mu_{n}$$ be the $n^{th}$ power Hilbert symbol. Recall that it is an anti-symmetric bilinear form and that its kernel in each argument is ${F^*}^{n}$. Hence, it gives rise to a non-degenerate bilinear form on $F^* / {F^*}^{n} \times F^* / {F^*}^{n}$. In particular, it identifies $F^* / {F^*}^{n}$ with $\widehat{F^* / {F^*}^{n}}$. The group $\widehat{F^* / {F^*}^{n}}$ may also  be identified with the group of  characters of $F^*$ whose order divide $n$. For $x \in F^*$ let $\eta_x$ be the character of $F^*$ defined by
$$y \mapsto \eta_x(y)=(x,y).$$
When convenient we  shall also think of $\eta_x$ as an element in  $\widehat{F^*/{F^*}^n}$.

A subgroup $\overline{J}$ and of $F^*/{F^*}^n$ is called  a maximal isotropic subgroup if   $$\overline{J} =\bigcap_{x \in {\overline{J}}} \operatorname{ker}(\eta_x).$$ By  \cite{Sz19}, Lemma 2.3, there exist two maximal isotropic subgroups $\overline{J}$ and $\overline{K}$ of $F^*/{F^*}^n$ such that
\begin{enumerate}
\item $\overline{J} \times \overline{K}=F^*/{F^*}^n.$
\item The map $k \mapsto {\eta_k \! \mid}_{\overline{J}}$ is an isomorphism from $\overline{K}$ to the Pontryagin dual of $\overline{J}$.
\end{enumerate}
\begin{remark} \label{stanrem} In the case where $gcd(n,p)=1$, the standard choice in the literature is $$(\overline{J},\overline{K})=(\Of^* {{ F}^*}^n / {{ F}^*}^n, <\varpi> {{ F}^*}^n / {{ F}^*}^n).$$
We shall not use this.
\end{remark}

Fix $k \in \overline{K}$. The following partial $\gamma$-factor was  defined in \cite{Sz19}, Page 127,
$$\gamma_{_J}(s,\chi,\psi,k)={(\# \overline{J})}^{-1} \sum_{j \in \overline{J}} \gamma(s,\chi\eta_j,\psi)\eta_k(j).$$
See \cite{Sz19}, Theorem 2.12 for the motivation for this definition. By Fourier inversion for finite abelian groups we have

\begin{equation} \label{invert} \sum_{k \in \overline{K}}\gamma_{_J}(s,\chi,\psi,k)=\gamma(s,\chi,\psi).\end{equation}

\subsection{The covering of $SL_2(F)$}
Most of the material in this section and in Sections \ref{torussec}, \ref{prinsec} and \ref{whidefsec} is well known. Full details and references can be found in \cite{Sz19}.

Let $G={{ SL}_2({ F})}$ be the group of two by two matrices with entries in ${ F}$ whose determinants equal 1. Let ${ N} \simeq { F}$ be the group of upper triangular unipotent matrices in $G$ and let ${ H} \simeq { F}^*$ be the group of diagonal elements inside  $G$. Denote ${ B}={ H} \ltimes { N}$. For $x \in {  F}$, and  $a\in { F}^*$ we shall write
$$u(x)=\left( \begin{array}{cc} {1} & {x} \\ {0} & {1} \end{array} \right), \quad  h(a)=\left( \begin{array}{cc} {a} & {0} \\ {0} & {a^{-1}} \end{array} \right).$$
The map $u(x)\mapsto \psi(x)$ defines a character of $N$ we shall continue to denote it by $\psi$.

Let $\mslt^{(n)}=\widetilde{{ SL}_2({ F})^{n}}$ be the topological central extension of $\slt$ by $\mu_n$ constructed by Kubota in \cite{Kub}. We have the short exact sequence
$$1\rightarrow \mu_n \rightarrow \msl  \rightarrow  G \rightarrow  1.$$

We shall denote by $\widetilde{A}$ the inverse image in $\widetilde{G}$ of a subgroup $A$ of $G$. A (complex) representation of $\mslt$ or any of its subgroups containing $\mu_n$ is called genuine if the central subgroup $\mu_n$ acts by the previously fixed injective character $\mu_n \hookrightarrow \C^*.$


\subsection{The metaplectic torus} \label{torussec}

If $n>2$, $\widetilde{H}$ is not abelian. Rather it is a 2-step nilpotent group whose center has a finite index. We note that $Z(\widetilde{H})$, the center of $\widetilde{H}$,  is $\widetilde{C}$ where
$$C=\{h(a) \mid a \in {F^*}^n \}.$$
By  the Stone-Von Neumann Theorem, see  Theorem 3.1 in \cite{Weissman09} for example, the isomorphism class of a genuine smooth irreducible  representation $\sigma$ of $\widetilde{{H}}$  is determined by its central character $\chi_\sigma$. Moreover, a realization of $\sigma$ is given by inducing from  $\chi'_\sigma$, a character of a maximal abelian subgroup of $\widetilde{H}$  which extends $\chi_\sigma$. The set of maximal abelian subgroups of $\widetilde{H}$ is in bijection with the set of maximal isotropic subgroups of $F^*/{F^*}^n$. Let $J$ be the pull-back of $\overline{J}$ to $F^*$. Set
$$M=\{h(a) \mid a \in J \}.$$
Then $\widetilde{M}$ is a maximal abelian subgroup of $\widetilde{H}$ and
$$[\widetilde{H}:\widetilde{M}]=[J:{F^*}^n]=\sqrt{[F^*:{F^*}^{n}]}=n \ab n \ab^{-\half}$$
See Section 3 in \cite{Sz19}. This implies that $\dim(\sigma)=n \ab n \ab^{-\half}$.

Since $n$ is odd  $Z(\widetilde{H})$ is canonically isomorphic to $C\times \mu_n$ and  $\widetilde{M}$  is canonically isomorphic to $M\times \mu_n$. This implies that set of genuine characters of  $Z(\widetilde{H})$ is canonically parameterized by the set of characters of ${F^*}^n$ and that the set of characters of $\widetilde{M}$ is canonically parameterized by the set of characters of $J$, see Section 4.2 in \cite{Sz19}.

The set of characters of ${F^*}^n$ is naturally identified with the set of $n^{th}$ powers of characters of $F^*$. Thus, given a genuine smooth irreducible  representation $\sigma$ of $\widetilde{{H}}$ we may associate to it
a unique character $\chi^n$ where $\chi$ is a character of $F^*$. We may think of $\chi^n$ as the restriction of $\chi$ to ${F^*}^n$. Note that $\sigma$ is unitary if and only if $\chi$ is unitary. We shall identify the restriction of  $\chi$ to $J$ with a genuine character of $\mm$ and we continue to denote this genuine character by $\chi$.

For $t\in \widetilde{H}$, an inverse image of $h(a)$ we set $\ab t \ab=\ab a \ab$. We now define $\sigma_s$ to be the representation of $\widetilde{H}$ given by $t\mapsto \ab t \ab^s \sigma(t)$. If $\chi^n$ is the character of $F^*$ associated with $\sigma$ then $(\chi_s)^n$ is the character of $F^*$ associated with $\sigma_s$.

Fix $w$, an inverse image of $w_{_0}=\left( \begin{array}{cc} {0} & {1} \\ {-1} & {0} \end{array} \right)$ in $\msl$. Since $w$ normalizes $\widetilde{H}$ and $\mu_n$ is central in $\msl$,  the automorphism of $\widetilde{H}$ defined by $t\mapsto wtw^{-1}$ is independent of the choice of an inverse image of $w_o$. We define  $\sigma^w$ to be a representation of  $\widetilde{{H}}$ by setting
$$t \mapsto \sigma^w(t)=\sigma(wtw^{-1}).$$
We note that ${(\sigma^w)}^w=\sigma$, ${(\sigma_s)}^w={(\sigma^w)}_{-s}$ and that if $\chi^n$ is associated  $\sigma$ then $\chi^{-n}$ is associated with $\sigma^w$.

\subsection{Genuine Principal series representations} \label{prinsec}
$\widetilde{G}$ splits uniquely over $N$. We shall identify $N$ with its image in  $\widetilde{G}$. Moreover, $\widetilde {B}=\widetilde{H} \ltimes {N}$. Therefore, as in the linear case, any representation $(\sigma,V_\sigma)$ of $\widetilde{H}$ can be extended to a representation of $\widetilde{{ B}}$ by defining it to be trivial on  ${ N}$. We shall not distinguish between representations of $\widetilde{{H}}$  and those of $\widetilde{{ B}}$. We consider the normalized parabolic induction
$$I(\sigma,s)=Ind^{\mslt}_{\mb} \sigma_{s+1}$$
and we set  $I(\sigma)=I(\sigma,0)$.

Fix $f \in { I}(\sigma)$ and $s \in \C$. Given $g \in \widetilde{{G}}$ we pick $t \in \widetilde{{H}}, \, n \in { N}$ and $k$ an inverse image inside $\widetilde{{G}}$ of an element of ${ SL}_2(\Of)$ such that $g=tnk$ and we set
\begin{equation} \label{flatsec} f_s(g)=\ab t \ab^sf(g). \end{equation} As in the linear case one verifies, using the Iwasawa decomposition of $G$, that $f_s$ is a well defined element in ${ I}(\sigma,s)$ and that $f\mapsto  f_s$ is an isomorphism of vector spaces. Similar to the linear case we have the following:
\begin{lem} \label{inter} For $f\in { I}(\sigma)$ and $g \in \widetilde{{G}}$ the integral
\begin{equation} \nonumber \int_F f_s\bigl(w u(x)g \bigr) \, d_{\psi}x \end{equation} converges absolutely to a $V_\sigma$ valued rational function in $q^{-s}$ provided that $\ab \chi^n(\varpi)\ab <q^{Re(s)n}$. Here $\chi^n$ is the character of $F^*$ associated with $\sigma$. We shall denote its meromorphic continuation by  $\bigl(A(\sigma,s,\psi)(f_s)\bigr)(g)$. Away from its poles,

$$A(\sigma,s,\psi) \in \operatorname{Hom}_{\mslt} \bigl({ I}(\sigma,s) ,{ I}(\sigma^w,-s) \bigr).$$
\end{lem}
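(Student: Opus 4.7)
The plan is to prove, in sequence, absolute convergence on the stated half-plane, rationality in $q^{-s}$ (whence meromorphic continuation), and finally the intertwining property certifying $A(\sigma,s,\psi)\in\operatorname{Hom}_{\widetilde{G}}(I(\sigma,s),I(\sigma^w,-s))$. Write $\tilde h(a)$ for a fixed choice of lift of $h(a)$ to $\widetilde{H}$. For convergence I would use the Iwasawa decomposition
\[
w u(x)\;=\;\tilde h(-x^{-1})\,u(-x)\,\bigl(\begin{smallmatrix}1 & 0\\ x^{-1} & 1\end{smallmatrix}\bigr),\qquad |x|\geq 1,
\]
valid up to a bounded $\mu_n$ cocycle correction in the cover, with the right-hand factor in $SL_2(\Of)$. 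Combined with the transformation law $f_s(h(a)g')=|a|^{s+1}\sigma(\tilde h(a))f_s(g')$ of $I(\sigma,s)$ and the operator-norm asymptotic $\|\sigma(\tilde h(a))\|\asymp|a|^{r}$---valid because $\sigma=\operatorname{Ind}_{\widetilde{M}}^{\widetilde{H}}\chi$ is finite-dimensional with $\chi=|\cdot|^{r}\chi_0$, $\chi_0$ unitary---this bounds the integrand on $|x|\geq 1$ by a constant times $|x|^{-(\operatorname{Re}(s)+1+r)}$. The integrand on $|x|\leq 1$ is smooth on a compact set. Hence absolute convergence is guaranteed by $\operatorname{Re}(s)+r>0$, which rewrites as $|\chi^n(\varpi)|=q^{-rn}<q^{\operatorname{Re}(s)n}$.

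For rationality and meromorphic continuation, smoothness of $f$ gives a compact open subgroup $K_0\subset\widetilde{G}$ fixing $f$ on the right. For $|x|$ sufficiently large the Iwasawa factor $\bigl(\begin{smallmatrix}1 & 0\\ x^{-1} & 1\end{smallmatrix}\bigr)g$ lies in a single $K_0$-coset; decomposing $F$ into annuli $\varpi^{-k}\Of^*$, $k\geq N$, the integrand on each annulus becomes a constant multiple of $q^{-k(s+1)}\chi(\varpi^{-k})$. Summing over $k\geq N$ gives a geometric series in $q^{-s}$ with rational sum, while the contribution from $|x|\leq q^{N-1}$ is a finite Laurent polynomial in $q^{-s}$. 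Rationality in $q^{-s}$ immediately yields the meromorphic continuation.

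Right $\widetilde{G}$-equivariance of $A(\sigma,s,\psi)$ is immediate from translation invariance of $d_\psi x$. To verify that $A(f_s)$ transforms correctly on the left I would use the identity $wu(x)h(a)=h(a^{-1})wu(a^{-2}x)$ in $SL_2(F)$, lifted to the cover (Kubota cocycle corrections take values in $\mu_n$ and are absorbed by the genuine character through which $\sigma$ restricts to $\mu_n$), together with the substitution $y=a^{-2}x$ (Jacobian $|a|^{2}$) and the transformation law of $f_s$. A direct calculation yields
\[
(Af_s)(h(a)g)\;=\;|a|^{1-s}\sigma(\tilde h(a^{-1}))(Af_s)(g)\;=\;|a|^{1-s}\sigma^w(\tilde h(a))(Af_s)(g),
\]
precisely the transformation law of $I(\sigma^w,-s)$. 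The main obstacle is the operator-norm estimate for the non-one-dimensional representation $\sigma$; once this is in hand, the argument parallels the linear $SL_2$ case.
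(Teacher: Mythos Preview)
Your argument is correct and is the standard one: Iwasawa decomposition and a norm estimate on $\sigma$ give absolute convergence, the annulus decomposition plus periodicity of $\sigma(\tilde h(\varpi^k))$ modulo $n$ gives rationality in $q^{-ns}$, and the identity $wu(x)\tilde h(a)=\epsilon(a)\tilde h(a^{-1})wu(a^{-2}x)$ in the cover (with $\epsilon(a)\in\mu_n$) gives the left transformation law, the cocycle $\epsilon(a)$ being exactly the one appearing in the definition of $\sigma^w$ so that it cancels. The paper itself does not prove this lemma at all; it simply refers the reader to Section~7 of McNamara's paper for the proof. So there is nothing to compare on the level of argument---you have supplied a self-contained proof where the paper defers to the literature.

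Two small points of precision. First, in the rationality step, since $\sigma$ is not one-dimensional the contribution of the annulus $|x|=q^{k}$ is not literally a scalar multiple of $q^{-k(s+1)}\chi(\varpi^{-k})$; it is $q^{-k(s+1)}$ times a $V_\sigma$-valued expression that is periodic in $k$ modulo $n$ (because $\tilde h(\varpi^n)\in Z(\widetilde{H})$ acts by the scalar $\chi^n(\varpi)$), so the resulting geometric series has ratio $q^{-n(s+1)}\chi^n(\varpi)^{-1}$ and one obtains rationality in $q^{-ns}$. Second, your phrase ``absorbed by the genuine character'' for the cocycle in the intertwining verification is slightly misleading: the point is not that the $\mu_n$-factor disappears, but that the same factor $\epsilon(a)$ arising from $w\tilde h(a)=\epsilon(a)\tilde h(a^{-1})w$ is precisely what relates $\sigma(\tilde h(a^{-1}))$ to $\sigma^w(\tilde h(a))$, so the two occurrences match.
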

For the proof see  Section 7 of \cite {Mc} for example.

We note that $A(\sigma,s,\psi)$ depends on $\psi$ only via the choice of the Haar-measure and that since $d_{\psi_c}=\ab c \ab^{\half} d_\psi$ we have
\begin{equation} \label{interpsi} A(\sigma,s,\psi_c)=\ab c \ab^{\half}A(\sigma,s,\psi). \end{equation}
While this dependence is typically suppressed in the literature we chose to indicate it in this paper.

\subsection{The space of Whittaker functionals} \label{whidefsec}
A $\psi$-Whittaker functional on a representation $(\pi,V_\pi)$ of $\msl$ is a functional on $V_\pi$ satisfying
$$\xi\bigl(\pi \bigl(u \bigr)v \bigr)=\psi(u)\xi(v)$$
for all $v \in V_\pi$, $u\in N$. Denote by $Wh_\psi(\pi)=Hom_N(\pi,\psi)$ the space of $\psi$-Whittaker functionals on $\pi$. If $\tau$ is another representation of $\msl$ then any
$A \in Hom_{\msl}(\pi,\tau)$ gives rise to a linear map $$A^\psi: Wh_\psi(\tau) \rightarrow Wh_\psi(\pi)$$  defined by $\xi \mapsto \xi \circ A$. We note that if $\pi=\tau$ and $A$ is the identity map then $A^\psi$ is also the identity map.

As noted in the introduction, unlike the linear case, the space of Whittaker functionals on smooth admissible irreducible genuine representations of $\widetilde{G}$ may be greater than 1. By Rodier Heredity, regardless of the question of reducibility, we have  $$\dim Wh_\psi\bigl(I(\sigma) \bigr)=n \ab n \ab^{-\half},$$ see Lemma 3.5 in \cite{Sz19} for example.

\section{Main result} \label{mainresec}
\subsection{Plancherel measure} \label{planchsec}
Let $\sigma$ be  a smooth  genuine irreducible representation of $\mh$. There exists a rational function in $q^{-ns}$, $\mu_\psi(\sigma^w,s)$, such that
\begin{equation} \label{plangen} A(\sigma^w,-s,\psi) \circ A(\sigma,s,\psi)=\chi(-1)\mu_\psi^{-1}(\sigma,s)Id. \end{equation}
(here $Id$ is the identity map). See \cite{Sz19}, Section 5.1  for exact details. By Theorem 5.1 of \cite{GoSz} if $gcd(p,n)=1$ we have

\begin{equation} \label{planformula}\mu_\psi(\sigma,s)=\chi^n(-1)\gamma(1+ns,\chi^n,\psi_n)\gamma(1-ns,\chi^{-n},\psi_n) \end{equation}
Here $\chi^n$ is the character of $F^*$ associated with $\sigma$ (one needs to use \eqref{epsilon twist and inv}). By Theorem 5.7 of \cite{Sz19}, if $\chi^n$ is unramified then \eqref{planformula} holds also for the case where $gcd(p,n)>1$ (note that $q^{e(\psi_n)}=\ab n \ab q^{e(\psi)}$).

The Knapp-Stein dimension theorem for covering groups proven in \cite{Cai} states that given that $\sigma$ is unitary, $I(\sigma)$ is reducible if and only if $\sigma \simeq \sigma^w$ and $\mu_n^{-1}(\sigma,s)$ is analytic at $s=0$. By studying  the  analytical properties of the Plancherel measure it was proven in Proposition 5.5 if \cite{Sz19} that if $\sigma$ is unitary then $I(\sigma)$ is reducible of and only if $\chi^n$ is a non-trivial quadratic character of $F^*/{F^*}^n$. Since $n$ is odd, any non-trivial quadratic character of $F^*/{F^*}^n$ may be extended to a unique non-trivial quadratic character of $F^*$.

For $\theta$, a non-trivial quadratic character of $F^*$, let  $\sigma=\sigma_{_\theta}$ be (the isomorphism class of) the genuine unitary representation of $\mh$ whose associated character is $\theta^n=\theta$. By the above, as in the linear case, the map $\theta \mapsto I(\sigma_{_\theta})$ is a bijection from the set of non-trivial quadratic characters of $F^*$ to the set of reducible genuine principal series representations of $\msl$ induced from a unitary data.

\begin{prop} \label{planprop}  Equation \eqref {planformula} holds for the case where $\chi^n=\chi=\theta$ is a non-trivial quadratic character of $F^*$.
\end{prop}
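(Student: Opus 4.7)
The plan is to extend the argument of Theorem 5.7 of \cite{Sz19} (which establishes \eqref{planformula} in the unramified case) to the quadratic case by exploiting the freedom to choose $\chi$ itself to be quadratic. Since $n$ is odd and $\theta$ is quadratic we have $\theta^n=\theta$, so we take $\chi=\theta$ as the representative of $\sigma=\sigma_\theta$; then $\chi^{-1}=\chi$, and since the associated character of $\sigma^w$ is $\chi^{-n}=\theta$ we have $\sigma^w\simeq\sigma$. The defining relation \eqref{plangen} thus reads
$$A(\sigma,-s,\psi)\circ A(\sigma,s,\psi)=\theta(-1)\mu_\psi^{-1}(\sigma,s)\,\mathrm{Id},$$
and the task is to identify the scalar on the right-hand side with $[\theta(-1)\gamma(1+ns,\theta,\psi_n)\gamma(1-ns,\theta,\psi_n)]^{-1}$.

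The main step is to invoke the coarse local coefficient matrix description of \cite{Sz19}, Theorem 4.12, which represents $A(\sigma,s,\psi)$ in a suitable basis as a matrix whose entries are built from the partial $\gamma$-factors $\gamma_{_J}(s,\chi\eta_j,\psi,k)$. One forms the matrix product of $A(\sigma,-s,\psi)$ with $A(\sigma,s,\psi)$, extracts a single (say, diagonal) entry, unfolds the definition of $\gamma_{_J}$, and applies Fourier inversion on the finite abelian group $\overline J$ in the form of \eqref{invert} together with the orthogonality of the characters of $\overline J$ indexed by $\overline K$. Using that $\chi=\chi^{-1}$ pairs the contributions of $\chi\eta_j$ and $(\chi\eta_j)^{-1}=\chi\eta_{j^{-1}}$, the double sum should collapse into a single sum over $\overline J$ of products of Tate $\gamma$-factors evaluated at the twists $\chi\eta_j$.

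The final step is to match this sum against $\theta(-1)\gamma(1+ns,\theta,\psi_n)\gamma(1-ns,\theta,\psi_n)$. The $L$-factor part of each summand depends only on $(\chi\eta_j)^n=\theta$ and is therefore uniform in $j$, so the content reduces to evaluating a finite sum of $\epsilon$-factors. This is handled using the Tate identities \eqref{Tate gamma}, \eqref{epsilon twist and inv} and \eqref{epsilon old twist}, the cardinality $\#\overline J=n|n|^{-\half}$, and the relation $q^{e(\psi_n)}=|n|q^{e(\psi)}$ (which accounts for the appearance of $\psi_n$ and of the arguments $1\pm ns$ on the right-hand side of \eqref{planformula}). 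The main obstacle, and the genuine novelty beyond the unramified Theorem 5.7 of \cite{Sz19}, is to control this $\epsilon$-sum when the twists $\chi\eta_j$ have large and unequal conductors (the wildly ramified case); the essential input is the quadraticity of $\theta$, which supplies the symmetries among $\{\chi\eta_j\}_{j\in\overline J}$ needed to reduce the finite sum to a single product.
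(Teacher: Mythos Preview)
Your plan takes a much longer route than the paper and, more importantly, the step you flag as the ``main obstacle'' is the entire content of the proposition, and you do not actually carry it out.

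You misread the role of Theorem~5.7 of \cite{Sz19}. It is not confined to unramified $\chi^n$: it gives, for any $\sigma$, an explicit expression for the Plancherel measure whose comparison with \eqref{planformula} reduces to the elementary conductor identity
\[
[F^*:{F^*}^n]^{-1}\sum_{\eta\in\widehat{F^*/{F^*}^n}}q^{-e(\chi^n\eta)}=\ab n\ab\,q^{-e(\chi^n)}.
\]
The unramified statement quoted before the proposition is just the case where this identity is immediate. Hence there is no need to rerun the local-coefficients-matrix computation of Theorem~4.12, form the matrix product, and perform Fourier inversion: all of that is already packaged in Theorem~5.7. The paper simply cites it and is left with the conductor sum above for $\chi^n=\theta$, i.e.\ with showing that the left-hand side equals $\ab n\ab q^{-1}$.

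The paper's evaluation of that sum rests on two concrete observations you do not mention. Since $n$ is odd and $p\mid n$ we have $p\neq 2$, so $e(\theta)=1$; and since $\gcd(2,n)=1$, for every \emph{ramified} $\eta\in\widehat{F^*/{F^*}^n}$ one has $e(\theta\eta)=e(\eta)$ (an order-$2$ character and a character of odd order cannot cancel on $\Of^*$). The $n$ unramified $\eta$'s then contribute $nq^{-1}$, and the ramified ones contribute $\sum_{\eta\ \mathrm{ram.}}q^{-e(\eta)}=n(n-1)q^{-1}$ by an identity already recorded in \cite{Sz19}. Dividing by $[F^*:{F^*}^n]=n^2\ab n\ab^{-1}$ gives $\ab n\ab q^{-1}$. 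Your phrase ``the essential input is the quadraticity of $\theta$, which supplies the symmetries among $\{\chi\eta_j\}$'' does not substitute for this computation.

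There is also a technical slip. Your claim that ``the $L$-factor part of each summand depends only on $(\chi\eta_j)^n=\theta$ and is therefore uniform in $j$'' is wrong as stated: $L(s,\chi\eta_j)$ depends on whether $\chi\eta_j$ itself is ramified, not on its $n$-th power. It \emph{is} true here that every $\chi\eta_j$ is ramified, but for the coprimality reason above, not the one you give.
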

\begin{proof} We only need to consider the case where $gcd(n,p)>1$ and $\theta$ is ramified. Note that in this case $p\neq 2$ as $n$ is odd. Consequently, $e(\theta)=1$ (see Lemma 4.15 in \cite{Sz19} for example). Since $gcd(2,n)=1$,  for  any ramified element in $\eta \in \widehat{F^*/ {F^*}^n}$ we have  $e(\theta\eta)=e(\eta)$.  By Theorem 5.7 in \cite{Sz19} we only need to prove that
$$[F^*:{F^*}^n]^{-1} \sum_{\eta \in \widehat{F^*/ {F^*}^n}}q^{-e(\theta\eta)}=\ab n \ab q^{-1}.$$
Indeed, the $n$ unramified elements in $ \widehat{F^*/ {F^*}^n}$ contribute $nq^{-1}$ the sum above.
Thus,
$$[F^*:{F^*}^n]^{-1} \sum_{\eta \in \widehat{F^*/ {F^*}^n}}q^{-e(\theta\eta)}=n^{-2} \ab n \ab \Bigl(nq^{-1}+
\sum_{\eta \in \widehat{F^*/ {F^*}^n},  \eta \operatorname{is} \,  \operatorname{ramified. }} q^{-e(\eta)} \bigr).$$
From the identify in last line of Page 156 of \cite{Sz19} it follows that  $$\sum_{\eta \in \widehat{F^*/ {F^*}^n},  \eta \operatorname{is} \,  \operatorname{ramified. }} q^{-e(\eta)}=n(n-1)q^{-1}.$$ The proof is now complete.
\end{proof}
\subsection{Induction by stages}
Recall that we may realize $\sigma$ as $Ind_{\mm}^{\mh} \chi_\sigma'$ where $\chi_\sigma'$ is a character of $\mm$ whose restriction to $Z(\widetilde{H})$ equals $\chi_\sigma$. Recall also that  if $\chi^n$ is the character of $F^*$ associated with $\sigma$ then we may identify  $\chi_\sigma'$ with the restriction of $\chi$ to $J$. Using induction by stages we may realize $I(\sigma,s)$ as
$$I(\chi,s)=Ind^\mslt_{N \mm}  \chi_{s+1}.$$
Note that by Stone-Von Neumann theorem, if $\eta$ is a character of $F^*$ whose restriction to ${F^*}^n$ is trivial (equivalently, $\eta \in \widehat{{F^*}/{F^*}^n}$) then $I(\chi,s) \simeq (\chi\eta,s)$. When $s=0$ we write  $I(\chi)$ instead of $I(\chi,0)$.

Using this realization, away from its poles, the intertwining operator $A(\chi,s,\psi)$ takes the form
$$A(\chi,s,\psi) \in \operatorname{Hom}_{\mslt} \bigl({ I}(\chi,s) ,{ I}((\chi^{-1},-s) \bigr).$$
See the commutative diagram in Page 137 of \cite{Sz19} for a proof. That diagram also implies that
$$A(\chi^{-1},-s,\psi) \circ A(\chi,s,\psi)=\chi(-1)\mu_\psi^{-1}(\sigma,s)Id .$$

In the special case where $\chi= \theta$ is a non-trivial quadratic character of $F^*$,   $A(\theta,0,\psi)$ is a self intertwining operator. Moreover, by the theory of $R$-groups, it is not a scalar operator, see \cite {Cai}. Using Proposition \ref{planprop} and Equation \eqref{changepsi} we obtain
\begin{equation} \label{planeq}A(\theta,0,\psi)^2=\ab n \ab \gamma(1,\theta,\psi)^2Id. \end{equation}

\subsection{A scattering matrix} \label{scatsec}
In Section 4.2 of \cite{Sz19} we have introduced a  basis  $\{\lambda_{b,\chi,\psi,s} \mid b\in \overline{K}\}$ for $Wh_{\psi}\bigl(I(\chi,s)\bigr)$ and a function
$$\tau(\cdot,\cdot,\chi,s,\psi):\overline{K} \times \overline{K} \rightarrow \C[q^{-s}].$$
defined by the relation
\begin{equation} \label{tauprop} A^\psi(\chi,s,\psi)\bigl(\lambda_{a^{-1},\chi^{-1},\psi,-s}\bigr)=\sum_{b \in \overline{K}} \tau(a,b,\chi_{},s,\psi)\lambda_{b,\chi,\psi,s}.\end{equation}
See \cite{Sz19}, Proposition 4.9. The appearance of $a^{-1}$  rather than $a$ on the left hand side is a manifestation of the fact that both Whittaker spaces $Wh_{\psi}\bigl(I(\chi,s)\bigr)$ and   $Wh_{\psi}\bigl(I(\chi^{-1},-s)\bigr)$ were identified in a canonical way with  the space of functionals on $\sigma$. This identification gave rise to the local coefficients matrix whose characteristic polynomial is an invariant of $\sigma$ and $\psi$, see Section 3.2 of \cite{GSS2}.

However, if $\chi=\theta$ is a non-trivial quadratic character of $F^*$ then $A^{\psi_c}(\theta,0,\psi)$ is an endomorphism on the space $Wh_{\psi_c}\bigl(I(\theta,0)\bigr)$ and we wish to change our point of view. In this special case it follows from the above that if we define
$$M(\cdot,\cdot,\chi,\psi):\overline{K} \times \overline{K} \rightarrow \C[q^{-s}]$$
by $$M(a,b,\theta,\psi)=\tau(a^{-1},b,\theta,0,\psi).$$
then
 \begin{equation} \label{whym}A^\psi(\theta,0,\psi)\bigl(\lambda_{a,\theta,\psi,0}\bigr)=\sum_{b \in \overline{K}} M(a,b,\theta,\psi)\lambda_{b,\theta,\psi,0}. \end{equation}
In other words, $M(\cdot,\cdot,\theta,\psi)$ represents the endomorphism $A^\psi(\theta,0,\psi)$.
\begin{remark} Suppose that $gcd(n,p)=1$ and  that $\theta$ is the unique non-trivial quadratic unramified character of $F^*$. If one fixes $\overline{J}$ as in Remark \ref{stanrem} then the conjugacy class of $M(\cdot,\cdot,\chi,\psi)$ is the same as the conjugacy class of the scattering matrix computed in \cite{Mc2} as both matrices represents the same map.
\end{remark}
\begin{lem} \label{compm} Let $\theta$ be a non-trivial quadratic character of $F^*$. We have
$$M(a,b,\theta,\psi)=\gamma_{_J}(1,\theta\eta_{a^{-1}b},\psi,(ab)^{-1})$$
 \end{lem}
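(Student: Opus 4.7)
The plan is to reduce the identity to the already-established formula for the scattering function $\tau$ from Section 4.2 of \cite{Sz19} (Proposition 4.9 there, together with Theorem 2.12 of \cite{Sz19}, which was the motivation for introducing $\gamma_J$ in the first place). By the definition $M(a,b,\theta,\psi) = \tau(a^{-1},b,\theta,0,\psi)$ recorded immediately before the lemma, the claim is equivalent to
$$\tau(a^{-1},b,\theta,0,\psi) = \gamma_{_J}(1,\theta\eta_{a^{-1}b},\psi,(ab)^{-1}),$$
so the task is purely to specialize the general formula for $\tau$ at $\chi = \theta$, $s = 0$, and $a \mapsto a^{-1}$.

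The first step is to recall the explicit formula for $\tau(a,b,\chi,s,\psi)$ from \cite{Sz19}. That formula expresses $\tau$ as a partial $\gamma_J$-factor whose character twist (the subscript of $\eta$ in the first slot) and whose last $\overline{K}$-argument are determined by $a$ and $b$. Conceptually, the reason to expect such an expression is that the matrix representing $A^\psi(\chi,s,\psi)$ in the basis $\{\lambda_{b,\chi,\psi,s}\}$ should, by construction of that basis via induction from the maximal abelian subgroup $\widetilde{M}$, be given by the Fourier transform along the finite abelian group $\overline{J}$ of the scalar intertwining data; this is precisely the sum-over-$\overline{J}$ defining $\gamma_J$ in Section \ref{padicsec}. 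The inversion formula \eqref{invert} is the shadow of this fact when all $\overline{K}$-arguments are summed out.

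The remaining work is then a direct substitution. After setting $s = 0$, $\chi = \theta$, and $a \mapsto a^{-1}$, the twist in the first slot becomes $\theta\eta_{a^{-1}b}$ and the last argument becomes $(ab)^{-1}$, using the bilinearity of the Hilbert symbol ($\eta_{xy} = \eta_x\eta_y$, $\eta_{x^{-1}} = \eta_x^{-1}$) and the fact that $n$ is odd, which guarantees that $\overline{J}$ and $\overline{K}$ are honest abelian subgroups of $F^*/{F^*}^n$ on which the involution $k \mapsto k^{-1}$ acts. The only obstacle is clerical—aligning the sign and inversion conventions between the definition of $M$, the cited form of $\tau$ in \cite{Sz19}, and the definition of $\gamma_J$ in Section \ref{padicsec}. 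No new representation-theoretic or analytic ingredient is required; in particular, no appeal to the Plancherel formula or to the reducibility of $I(\theta)$ enters at this stage, since the formula for $\tau$ holds for all $\chi$ and $s$.
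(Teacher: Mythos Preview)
Your approach is essentially the same as the paper's: both reduce the lemma to the explicit description of $\tau(a,b,\chi,s,\psi)$ in \cite{Sz19} and then specialize at $\chi=\theta$, $s=0$, $a\mapsto a^{-1}$. The only correction is bibliographic: the relevant input is Theorem~4.12 of \cite{Sz19} (the ``coarse description'' of $\tau$ in terms of $\gamma_{_J}$), not Proposition~4.9, which merely records the defining relation \eqref{tauprop} and contains no formula for the entries.
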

\begin{proof} This follows at once from the coarse description of $\tau(a,b,\chi,0,\psi)$ given in \cite{Sz19}, Theorem 4.12.
 \end{proof}
\begin{prop} \label{trprop} Assume that $\theta$ is a non-trivial quadratic character of $F^*$. We have
$$\operatorname{tr }A^{\psi_c}(\theta,0,\psi)=\theta(c)\gamma(1,\theta,\psi).$$
\end{prop}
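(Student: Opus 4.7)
My plan is to compute the trace of $A^{\psi_c}(\theta,0,\psi)$ by first handling the case $c=1$ via the matrix representation in \eqref{whym} and Lemma \ref{compm}, and then deducing the general case from the explicit dependence of the intertwining operator on $\psi$ recorded in \eqref{interpsi} together with the change-of-character formula \eqref{changepsi}.

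For the first step, the endomorphism $A^\psi(\theta,0,\psi)$ is represented in the basis $\{\lambda_{a,\theta,\psi,0}\}_{a\in\overline{K}}$ by the matrix
$$M(a,b,\theta,\psi)=\gamma_{_J}\bigl(1,\theta\eta_{a^{-1}b},\psi,(ab)^{-1}\bigr).$$
On the diagonal $a=b$, the element $a^{-1}b$ is trivial in $F^*/{F^*}^n$, so $\eta_{a^{-1}b}$ is the trivial character, and the diagonal entry collapses to $\gamma_{_J}(1,\theta,\psi,a^{-2})$. Hence
$$\operatorname{tr} A^\psi(\theta,0,\psi)=\sum_{a\in\overline{K}}\gamma_{_J}(1,\theta,\psi,a^{-2}).$$
Since $\overline{K}\subseteq F^*/{F^*}^n$, every element of $\overline{K}$ has order dividing $n$, which is odd; squaring is therefore a bijection of $\overline{K}$. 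Reindexing by $k=a^{-2}$ and applying the Fourier inversion identity \eqref{invert} gives $\operatorname{tr} A^\psi(\theta,0,\psi)=\sum_{k\in\overline{K}}\gamma_{_J}(1,\theta,\psi,k)=\gamma(1,\theta,\psi)$, which is the $c=1$ case.

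For the general case, I run exactly the same argument with $\psi$ replaced by $\psi_c$ to obtain $\operatorname{tr} A^{\psi_c}(\theta,0,\psi_c)=\gamma(1,\theta,\psi_c)$. By \eqref{interpsi} the intertwining operators satisfy $A(\theta,0,\psi_c)=|c|^{\frac{1}{2}}A(\theta,0,\psi)$, and dualising gives $A^{\psi_c}(\theta,0,\psi_c)=|c|^{\frac{1}{2}}A^{\psi_c}(\theta,0,\psi)$, whence $\operatorname{tr} A^{\psi_c}(\theta,0,\psi)=|c|^{-\frac{1}{2}}\gamma(1,\theta,\psi_c)$. Finally, \eqref{changepsi} with $s=1$ and $\chi=\theta$, together with the equality $L(s,\theta^{-1})=L(s,\theta)$ coming from $\theta$ being quadratic, yields $\gamma(1,\theta,\psi_c)=\theta(c)|c|^{\frac{1}{2}}\gamma(1,\theta,\psi)$, and the two powers of $|c|^{\pm\frac{1}{2}}$ cancel to produce $\theta(c)\gamma(1,\theta,\psi)$.

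The only delicate point I foresee is confirming that the passage $\psi\to\psi_c$ in the first step is truly formal: one must check that the basis $\{\lambda_{a,\theta,\psi_c,0}\}$ and the matrix identity of Lemma \ref{compm} are stated uniformly enough in the Whittaker character that substituting $\psi_c$ for $\psi$ reproduces the same trace calculation verbatim. Once this is verified, the proof is essentially a one-line application of Fourier inversion on the finite abelian group $\overline{K}$, combined with bookkeeping of the $|c|^{1/2}$ factors.
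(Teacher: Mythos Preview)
Your proof is correct and follows essentially the same route as the paper: the $c=1$ case via Lemma \ref{compm}, the squaring bijection on $\overline{K}$, and Fourier inversion \eqref{invert} is identical, and for general $c$ the paper likewise notes that $|c|^{-\frac{1}{2}}M(\cdot,\cdot,\theta,\psi_c)$ represents $A^{\psi_c}(\theta,0,\psi)$ and then invokes \eqref{changepsi}. The only cosmetic difference is that the paper scales the matrix before taking the trace while you take the trace of $A^{\psi_c}(\theta,0,\psi_c)$ first and then scale; your aside about $L(s,\theta^{-1})=L(s,\theta)$ is harmless but unnecessary, since the $L$-factors do not depend on $\psi$ and cancel automatically.
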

\begin{proof}
Assume first that $c=1$. From \eqref{whym} it follows that  $$\operatorname{Tr }A^{\psi}(\theta,0,\psi)=\sum_{a \in \overline{K}}M(a,a,\theta,\psi).$$
Lemma \ref{compm} gives
$$\sum_{a \in \overline{K}}M(a,a,\theta,\psi)=\sum_{a \in \overline{K}}\gamma_{_J}(1,\theta,\psi,a^{-2})=
\sum_{k \in \overline{K}}\gamma_{_J}(1,\theta,\psi,k).$$
For the last equality we have used the fact that since $\overline{K}$ is an abelian group of odd order, $a\mapsto a^{-2}$ is an automorphism of $\overline{K}$. With  \eqref{invert} the proof of for the case where $c=1$ is completed.

 We now move to the general case. From \eqref{interpsi} it follows that
$\ab c \ab^{-\half}M(\cdot,\cdot,\theta,\psi_c)$ represents  $A^{\psi_c}(\theta,0,\psi)$. Thus,
  $$\operatorname{Tr }A^{\psi_c}(\theta,0,a)=\ab c \ab^{-\half} \sum_{a \in \overline{K}}M(a,a,\theta,\psi_c)=\ab c \ab^{-\half} \gamma(1,\theta,\psi_c).$$
With \eqref{changepsi} we finish.
\end{proof}
\subsection{Whittaker dimensions } \label{mainsec}
From this point we shall write $\pi=I(\sigma_{_\theta})$. We define a normalized intertwining operator
\begin{equation} \label{noreq} \mathbb{A}(\theta,\psi)=\ab n \ab^{-\half}\gamma(1,\theta,\psi)^{-1} A(\theta,0,\psi). \end{equation}
From \eqref{planeq} it follows that $\mathbb{A}(\theta,\psi)^2=Id$. Since $\mathbb{A}(\theta,\psi)$ is not a scalar operator we may write  $I(\theta)$ as a sum of two irreducible representations of $\msl$
$$I(\theta)=  \pi^+_\psi \oplus \pi^-_\psi$$
where $\pi^{\pm}_\psi$ is the eigenspace  of $\mathbb{A}(\theta,\psi)$ corresponding to $\pm 1$. We note that since $\mathbb{A}(\theta,\psi)^2$ is the identity map it follows that $\half(I \pm \mathbb{A}(\theta,\psi))$ are the projections from $I(\theta)$ to  $\pi^{\pm}_\psi$.
\begin{thm} \label{mainthm}  Let $\theta$ be a non-trivial quadratic character of  $F^*$. Denote $\pi=I(\sigma_{_\theta})$. We have
$$\dim Wh_{\psi_c} (\pi^{+}_\psi)=\ab n \ab^{-\half} \frac{n+\theta(c)}{2}, \, \, \dim Wh_{\psi_c} (\pi^{-}_\psi)=\ab n \ab^{-\half} \frac{n-\theta(c)}{2}.$$
In particular,
 $$\dim Wh_\psi (\pi^{+}_\psi)=\ab n \ab^{-\half} \frac{n+1}{2}, \, \, \dim Wh_\psi (\pi^{-}_\psi)=\ab n \ab^{-\half} \frac{n-1}{2}.$$

\end{thm}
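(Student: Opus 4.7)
The plan is to realize $\pi^{\pm}_\psi$ as images of a pair of mutually complementary projections inside $\operatorname{End}_{\msl}(I(\theta))$, transfer the decomposition to the space of Whittaker functionals via the functor $Wh_{\psi_c}$, and then compute the resulting Whittaker dimensions as traces of idempotents.

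First I would use the relation $\mathbb{A}(\theta,\psi)^2=Id$ (equation \eqref{planeq}) to observe that $P^{\pm}:=\tfrac{1}{2}\bigl(Id\pm\mathbb{A}(\theta,\psi)\bigr)$ are mutually orthogonal $\msl$-equivariant projections satisfying $P^++P^-=Id$ and having images precisely $\pi^{\pm}_\psi$. Next I would transport this picture to the Whittaker side. For any smooth $\msl$-module $V$ split as $V=V^+\oplus V^-$, the Whittaker space splits correspondingly as $Wh_{\psi_c}(V)=Wh_{\psi_c}(V^+)\oplus Wh_{\psi_c}(V^-)$ via extension by zero. In our setting a direct computation using the definition $A^{\psi_c}(\xi)=\xi\circ A$ from Section~\ref{whidefsec} shows that $(P^{\pm})^{\psi_c}$ acts as the identity on $Wh_{\psi_c}(\pi^{\pm}_\psi)$ and as zero on $Wh_{\psi_c}(\pi^{\mp}_\psi)$; hence it is a projection on $Wh_{\psi_c}(I(\theta))$ with image $Wh_{\psi_c}(\pi^{\pm}_\psi)$. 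Consequently
$$\dim Wh_{\psi_c}(\pi^{\pm}_\psi)=\operatorname{tr}\bigl((P^{\pm})^{\psi_c}\bigr)=\frac{1}{2}\Bigl(\dim Wh_{\psi_c}\bigl(I(\theta)\bigr)\pm\operatorname{tr}\bigl(\mathbb{A}(\theta,\psi)^{\psi_c}\bigr)\Bigr).$$

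The remaining step is to evaluate the two quantities on the right. Rodier heredity, recalled at the end of Section~\ref{whidefsec}, gives $\dim Wh_{\psi_c}(I(\theta))=n\ab n\ab^{-\half}$. For the trace of the normalized operator, the linearity of the dual operation together with the definition \eqref{noreq} yields $\mathbb{A}(\theta,\psi)^{\psi_c}=\ab n\ab^{-\half}\gamma(1,\theta,\psi)^{-1}A^{\psi_c}(\theta,0,\psi)$, and Proposition~\ref{trprop} then gives
$$\operatorname{tr}\bigl(\mathbb{A}(\theta,\psi)^{\psi_c}\bigr)=\ab n\ab^{-\half}\gamma(1,\theta,\psi)^{-1}\cdot\theta(c)\gamma(1,\theta,\psi)=\theta(c)\ab n\ab^{-\half}.$$
Substituting into the trace identity yields the first displayed formula of the theorem, and specializing to $c=1$ produces the second while simultaneously confirming that the eigenspace labeling in Section~\ref{mainsec} agrees with the convention $\dim Wh_\psi(\pi^+_\psi)>\dim Wh_\psi(\pi^-_\psi)$ from the introduction.

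There is no genuine obstacle left: the heavy lifting has already been done upstream. The involutivity of $\mathbb{A}(\theta,\psi)$ came from the explicit Plancherel formula in Proposition~\ref{planprop}, and the trace of $A^{\psi_c}(\theta,0,\psi)$ was reduced via Lemma~\ref{compm} to Fourier inversion over $\overline{K}$ in Proposition~\ref{trprop}. The only point requiring care is the functoriality verification that $Wh_{\psi_c}$ respects the direct-sum decomposition and that $(P^{\pm})^{\psi_c}$ act as the claimed projections; this is the step I would write out in most detail, since once it is granted the theorem becomes a one-line trace computation.
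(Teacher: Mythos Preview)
Your proposal is correct and follows essentially the same approach as the paper's own proof: form the idempotents $\tfrac{1}{2}(Id\pm\mathbb{A}(\theta,\psi))$, pass to the Whittaker side, and compute dimensions as traces using Rodier heredity and Proposition~\ref{trprop}. The only difference is expository---you spell out the verification that $(P^{\pm})^{\psi_c}$ are the projections onto $Wh_{\psi_c}(\pi^{\pm}_\psi)$, whereas the paper states this in a single clause.
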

\begin{proof} Since $\half(Id \pm \mathbb{A}(\theta,\psi))^{\psi_c}$ are the projections from $ Wh_{\psi_c}\bigl(I(\theta)\bigr)$ to  $ Wh_{\psi_c} (\pi^{\pm}_\psi)$ we obtain
\begin{eqnarray}  \nonumber \dim Wh_{\psi_c} (\pi^{\pm}_\psi) &=& \operatorname{rank } \half(Id \pm \mathbb{A}(\theta,\psi))^{\psi_c}=\operatorname{tr } \half(Id \pm \mathbb{A}(\theta,\psi))^{\psi_c}\\ \nonumber  &=& \half \bigl(n \ab n \ab^{-\half}\pm\ab n \ab^{-\half}\gamma(1,\theta,\psi)^{-1}\operatorname{tr }A^{\psi_c}(\theta,0,\psi)\bigr)
\end{eqnarray}
With Proposition \ref{trprop} we finish.

\end{proof}
\begin{remark} \label{mainrem} Since $\dim Wh_\psi(\pi)$ is odd we know a-priori that the $\psi$-Whittaker dimensions of the two irreducible summands inside  $\pi$ are not equal. From the last Theorem it follows that we can define $\pi_\psi^\pm$ by stating that \begin{equation} \label{generic} \dim Wh_\psi (\pi^{+}_\psi)>\dim Wh_\psi (\pi^{-}_\psi).\end{equation} This second characterization of $\pi_\psi^\pm$ shows that the definition of these sub-representations as eigenspaces is free of the choice of $\overline{J}$ giving rise to the realization $I(\theta)$. The action of the intertwining operator on the two summands inside $\pi$ is analogous to   the formula for the action of the intertwining operator in the case of quasi-split reductive algebraic groups given in \cite{KS}. That formula involved the $R$-groups and had the generic summand as its base point. Here the $R$-group is the group of two elements and due to \eqref{generic}, $\pi^{+}_\psi$ plays the role of the generic summand.
\end{remark}

\begin{remark} \label{Sharem} Let $\sigma$ be an irreducible genuine representation of $\mh$ and let $\chi^n$ the character of $F^*$ associated with $\sigma$. In Section 3.5.1 of \cite{GSS} we have defined an invariant $S_w(\sigma,s,\psi)$ using the determinant  of the local coefficient matrix and the Plancherel measure and made the point that it is an analog of the reciprocal of Shahidi local coefficients defined for quasi-split reductive algebraic groups, \cite{Shabook}. In particular, it follows at once from the definition that
\begin{equation} \label{shainvplan} S_w(\sigma,s,\psi)S_w(\sigma^w,-s,\psi)=\chi^n(-1)\mu_\psi^{-1}(\sigma,s).\end{equation}
Note that Equation \eqref{shainvplan} is unchanged if we replace $S_w(\sigma,s,\psi)$ by $$S_w'(\sigma,s,\psi)=\chi^n(n)S_w(\sigma,s,\psi).$$ In the case where $gcd(n,p)=1$ and $\sigma$ is unramified it was shown in Theorem 3.14 of \cite{GSS} that $S_w(\sigma,s,\psi)=\gamma(1-ns,\chi^{-n},\psi_n).$ This last result was recently extended to all $\sigma$ under the additional assumption that $p$ is odd, see the introduction of  \cite{Sz24}. Thus, if $p$ is odd and relatively prime to $n$ then from \eqref{changepsi} it follows that the normalization factor $\ab n \ab^{\half}\gamma(1,\theta,\psi)$ we have used in \eqref{noreq}  for $A(\theta,0,\psi)$ equals $S_w'(\sigma_\theta,0,\psi)$. It is expected of course that these factors are equal in the other cases as well: note that from \eqref{shainvplan}, Proposition \ref{planprop} and Equation \eqref{changepsi} it follows that $S_w'(\sigma_\theta,0,\psi)^2=\ab n \ab \gamma(1,\theta,\psi)^2$. Thus, the factors are equal at least up to a sign for all $p$.
\end{remark}

\section{The action of $\mgl$} \label{glsec}
Let $\mgl$ be the $c$-twisted central extension of $\glt$ by $\mu_n$ introduced in \cite{KP}. The derived group of $\mgl$ is $\msl$. Therefore,  $\mgl$ is acting on $\msl$ by conjugations. For $h\in \msl$ and $g\in \widetilde{G}$ set $h^g=g^{-1}hg$.

Since $\mu_n$ is central in $\mgl$ this action factors through $\glt$ so for $h\in \msl$ and $g\in \glt$ we can write $h^g$ with no ambiguity. We also have an action of $\widetilde{G}$ on the set of isomorphism classes of representations of $\msl$. For a representation $\tau$ of  $\msl$ and $g\in \glt$ we define a representation $\tau^g$ of  $\msl$ by $h\mapsto \tau(h^g)$.

The centralizer of $\msl$ in $\mgl$ is $\widetilde{S}$, the inverse image in $\mgl$ of the group of scalar matrices in $\glt$, see Lemma 8.2 in \cite{GSS2}. Thus, the action of $\mgl$ on the set of isomorphism classes of representations of $\msl$ reduces to an action of  $\mgl/\widetilde{G^+}$ where $\widetilde{G^+}$ is the inverse image in $\mgl$ of
$$G^+=SG=\{g\in G\mid det(g)\in {F^*}^2 \}.$$
Let  $g \in \mgl$ be an inverse image in $\mgl$ of $g' \in \glt$. Set $\det(g)=\det(g')$. The map $g \mapsto \det(g)$ gives rise to an isomorphism from $\widetilde{G}/ \widetilde{G^+}$ to $F^*/{F^*}^2$. As a set of representatives of  $\widetilde{G}/ \widetilde{G^+}$ we can take a  set of inverse images of
$$\{diag(1,c) \mid c\in F^*/{F^*}^2\}.$$
We note that the group of inverse images in $\mgl$ of the diagonal subgroup in $\glt$ normalizes $N$. In particular for $u\in N$ we have \begin{equation} \label{conjpsi} \psi\bigl(u^{diag(1,c)}\bigr)=\psi_c(u). \end{equation}

\begin{lem} \label{fix} Let $\tau$ be a genuine principal series representation of $\msl$. For any $g \in  \mgl$, $\tau^g \simeq \tau.$
\end{lem}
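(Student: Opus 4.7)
The plan is to realize $\tau$ as an induced representation $I(\sigma)$, reduce to a convenient set of representatives for the $\mgl$-action on isomorphism classes, and verify that conjugation preserves the inducing data up to isomorphism.

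First I would reduce the problem. Since $\widetilde{S}$ centralizes $\msl$ and $G^+ = SG$ implies $\widetilde{G^+} = \widetilde{S}\cdot\msl$, every element of $\widetilde{G^+}$ acts on isomorphism classes of representations of $\msl$ as an inner automorphism composed with the trivial $\widetilde{S}$-action, hence trivially. Using the stated representatives of $\mgl/\widetilde{G^+} \simeq F^*/{F^*}^2$, it is enough to treat $g$ an inverse image in $\mgl$ of $\mathrm{diag}(1,c)$ for each $c \in F^*/{F^*}^2$. For such $g$, the element $\mathrm{diag}(1,c)$ normalizes $H$ and $N$ in $\glt$, so $g$ normalizes $\mh$ and $N$ and preserves the Borel $\mb$. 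A standard manipulation of normalized parabolic induction then gives $\tau^g \simeq I(\sigma^g)$, where $\sigma^g(t) = \sigma(g^{-1}tg)$; so the problem reduces to showing $\sigma \simeq \sigma^g$ as representations of $\mh$.

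By Stone--Von Neumann, it suffices to verify that the central characters of $\sigma$ and $\sigma^g$ agree on $Z(\mh) = \widetilde{C}$. Conjugation by $g$ fixes $\mu_n$ pointwise (as $\mu_n$ is central in $\mgl$), so it remains to compare the characters on inverse images of the elements $h(a^n)$ with $a \in F^*$. Since $\mathrm{diag}(1,c)$ commutes with every $h(a)$ in $\glt$, for any fixed set-theoretic section $a \mapsto \widetilde{h(a)}$ one has $g\widetilde{h(a)}g^{-1} = \widetilde{h(a)}\zeta(a)$ for a unique $\zeta(a)\in\mu_n$. The centrality of $\mu_n$ in $\mgl$ makes the cocycle defining $\mh$ drop out of the multiplicativity check, so $\zeta : F^* \to \mu_n$ is a group homomorphism. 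Since $\mu_n$ has exponent $n$, one gets $\zeta(a^n) = \zeta(a)^n = 1$ for every $a \in F^*$; hence conjugation by $g$ acts trivially on $\widetilde{C}$, the two central characters agree, and $\sigma \simeq \sigma^g$.

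There is no substantial obstacle here: the argument avoids any explicit use of the Kubota or twisted Kazhdan--Patterson cocycle. The key observation is that any homomorphism $F^* \to \mu_n$ is automatically trivial on $n$-th powers, which is precisely the piece of data controlling the isomorphism class of $\sigma$ through Stone--Von Neumann.
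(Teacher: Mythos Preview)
Your proof is correct and follows the same overall strategy as the paper: reduce to $g$ an inverse image of $diag(1,c)$, observe that conjugation by $g$ twists the inducing data by a $\mu_n$-valued character, and invoke Stone--Von Neumann to conclude the twist is invisible at the level of $\sigma$. The one genuine difference is in how triviality of this character on $Z(\mh)=\widetilde{C}$ is established. The paper appeals to the structural identity $Z(\mh) = Z(\widetilde{T})\cap\msl$ from Lemma 8.3 of \cite{GSS2}, where $\widetilde{T}$ is the diagonal torus in $\mgl$; since $g$ lies in $\widetilde{T}$, it then literally commutes with every element of $Z(\mh)$. You instead note that any homomorphism $F^*\to\mu_n$ annihilates $n$-th powers, which forces triviality on $C=h({F^*}^n)$ directly. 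Your route is more self-contained and avoids the external citation; the paper's route yields the marginally stronger statement that conjugation by $g$ is the identity on $Z(\mh)$, not merely that the associated character is trivial there. Either suffices for the lemma.
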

\begin{proof}  We realize $\tau$ as $I(\chi)$, an induction from $\widetilde{M}N$. Let $g\in \mgl $ be an inverse image of $g'=diag(1,c)$ and let $m \in \mm$ be an inverse image of $m'\in M$. Since $g'$ commutes with $m'$ there exists a character $\eta:M \rightarrow \mu_n$ such that  $m^g= \eta(m')m$. The map  $T:I(\chi)^g \rightarrow I(\chi\eta)$ defined by $T(f)(h)=f(h^g)$ is a $\msl$ isomorphism. Finally, by Lemma 8.3 in \cite{GSS2}, since $n$ is odd, $Z(\widetilde{H})= Z(\widetilde{T}) \cap \msl$. Consequently, the restriction of $\eta$ to $Z(\widetilde{H})$ is trivial. From the  Stone-Von Neumann theorem it follows that $I(\chi\eta) \simeq   I(\chi)$.
\end{proof}

\begin{remark} Lemma \ref{fix} is false for even fold covers of $G$ since in this case $Z(\widetilde{T}) \cap \msl \subsetneq Z(\widetilde{H})$.
\end{remark}
\begin{thm} \label{swapthm} Let $\theta$ be a non-trivial quadratic character of $F^*$. Denote $\pi=I(\sigma_{_\theta})$.  Fix  $g \in \mgl$.
\begin{enumerate}
\item If $\theta(\det g)=1$ then $\bigl( \pi^+_\psi \bigr)^g \simeq  \pi^+_\psi$ and $\bigl( \pi^-_\psi \bigr)^g \simeq  \pi^-_\psi$.
\item If $\theta(\det g)=-1$ then $\bigl( \pi^+_\psi \bigr)^g \simeq  \pi^-_\psi$ and $\bigl( \pi^-_\psi \bigr)^g \simeq  \pi^+_\psi$.
\end{enumerate}
\end{thm}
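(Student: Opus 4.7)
The plan is to compare Whittaker dimensions on the two sides of the conjugation action. First I would invoke Lemma \ref{fix} to get $\pi^g \simeq \pi = \pi^+_\psi \oplus \pi^-_\psi$; since this is a multiplicity-free decomposition into two non-isomorphic irreducibles, $(\pi^+_\psi)^g$ and $(\pi^-_\psi)^g$ constitute a permutation of the pair $\{\pi^+_\psi,\pi^-_\psi\}$. It therefore suffices to identify the isomorphism class of $(\pi^+_\psi)^g$.

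Next I would reduce to convenient coset representatives. Since the centralizer argument quoted in the excerpt shows that the $\mgl$-action on isomorphism classes factors through $\mgl/\widetilde{G^+}\simeq F^*/{F^*}^2$ via $\det$, and since both sides of the claim depend only on $\theta(\det g)$, I may assume $g$ is an inverse image of $\operatorname{diag}(1,c)$, so that $\det g = c$.

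The key step is a direct Whittaker-dimension calculation. For any representation $\tau$ of $\msl$ and any $\xi \in Wh_\psi(\tau)$, the identity $\xi(\tau^g(u)v) = \xi(\tau(u^g)v) = \psi(u^g)\xi(v) = \psi_c(u)\xi(v)$, using \eqref{conjpsi}, shows that the map $\xi\mapsto \xi$ is a linear isomorphism $Wh_\psi(\tau)\xrightarrow{\sim} Wh_{\psi_c}(\tau^g)$. Applying this to $\tau = \pi^+_\psi$ and invoking Theorem \ref{mainthm} yields $\dim Wh_{\psi_c}\bigl((\pi^+_\psi)^g\bigr) = \dim Wh_\psi(\pi^+_\psi) = |n|^{-\half}(n+1)/2$.

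To conclude, I would compare this value with the $\psi_c$-Whittaker dimensions of $\pi^\pm_\psi$ given by Theorem \ref{mainthm}: $\dim Wh_{\psi_c}(\pi^+_\psi) = |n|^{-\half}(n+\theta(c))/2$ and $\dim Wh_{\psi_c}(\pi^-_\psi) = |n|^{-\half}(n-\theta(c))/2$. When $\theta(c)=1$ the match forces $(\pi^+_\psi)^g \simeq \pi^+_\psi$ and hence $(\pi^-_\psi)^g \simeq \pi^-_\psi$; when $\theta(c)=-1$ the match forces $(\pi^+_\psi)^g \simeq \pi^-_\psi$ and $(\pi^-_\psi)^g \simeq \pi^+_\psi$. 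Since $\theta(c) = \theta(\det g)$, this is precisely the claim. There is no genuine obstacle beyond the already-established Theorem \ref{mainthm} and Lemma \ref{fix}; the only subtlety is ensuring that the natural identification of Whittaker spaces via \eqref{conjpsi} is applied in the correct direction, so that the $\psi_c$-dimension one computes is that of $(\pi^+_\psi)^g$ rather than of $\pi^+_\psi$ itself.
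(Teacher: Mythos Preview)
Your proposal is correct and follows essentially the same approach as the paper: reduce via Lemma \ref{fix} to a dichotomy, pass to representatives $\operatorname{diag}(1,c)$, use \eqref{conjpsi} to identify $Wh_\psi(\pi^+_\psi)$ with $Wh_{\psi_c}\bigl((\pi^+_\psi)^g\bigr)$, and then compare with the $\psi_c$-dimensions from Theorem \ref{mainthm}. The only difference is cosmetic: the paper packages the final comparison by invoking Remark \ref{mainrem}, whereas you write out the explicit numerical match.
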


\begin{proof} From Lemma \ref{fix} it follows that either $g$ fixes each of the 2 irreducible sub representations of $\pi$ or it swaps them. Thus, we only need to determine the isomorphism class of ${(\pi^+_\psi)}^g$.

As before we may assume that $g$ is an inverse image of $diag(1,c)$. In light of Theorem \ref{mainthm} and Remark \ref{mainrem} it is sufficient to show that $Wh_\psi(\pi^+)=Wh_{\psi_c}\bigl({(\pi^+_\psi)}^g \bigr)$. Indeed if $\xi \in Wh_\psi(\pi^+)$ then from \eqref{conjpsi} we deduce that for $u \in N $ we have
$$\xi \circ \pi^g(u)=\psi_c(u)\xi.$$ This shows that $Wh_\psi(\pi^+) \subseteq Wh_{\psi_c}\bigl({(\pi^+_\psi)}^g \bigr)$. The other inclusion is proven in the same way.
\end{proof}
\section{The unramified case} \label{unramsec}
In this section we shall use Theorem \ref{mainthm} and Theorem \ref{swapthm} to reprove and slightly extend the results in Section 5.4.1 of \cite{GSS3}.

Assume that $gcd(n,p)=1$. In this case $\msl$ splits over $SL_2(\Of)$ uniquely. Denote by $K_1$ the image of this group in $\msl$. Denote also $K_2=K_1^{diag(1,\varpi)}$.

Let $\sigma$ be an irreducible genuine representation of $\widetilde{H}$. Suppose that $\sigma$ is an irreducible representation associated with a character $\chi^n$ of $F^*$. $I(\sigma)$ is called unramified with respect to $K_i$ if it has a non-zero fixed $K_i$ vector. As in the linear case, due to the  Iwasawa decomposition, if $I(\sigma)$ is unramified then it has a one dimensional $K_i$ fixed subspace. We note that $I(\sigma)$ is unramified with respect to $K_1$ if and only if $\chi^n$ is unramified, see Section 8.6 in \cite{GSS2}. Since  $I(\sigma)$ is ramified with respect to $K_1$ if and only if $I(\sigma)^{diag(1,\varpi)}$ is ramified with respect to $K_2$ it follows from Lemma \ref{fix} that $I(\sigma)$ is unramified with respect to $K_1$ if and only if it is unramified with respect to $K_2$. We remark here that for even fold covers of $G$, a genuine principal series representation can not be unramified with respect to both $K_1$ and $K_2$.

Note that $F^*$ has exactly one non-trivial unramified quadratic charter, namely $x\mapsto \theta_u(x)=(-1)^{val(x)}$. Thus, there exists a unique genuine reducible unramified principal series representation $\pi$ of $\msl$ induced from a unitary representation. For $i=1,2$ let $v_i$ be the unique (up to normalization) $K_i$ spherical  vector and let $V_i$ be the $\msl$ invariant subspace of $\pi$ generated by $v_i$.

If  $e(\psi)$=0 then from  Proposition 7.3 in \cite{Gao18} we deduce that $\mathbb{A}(\theta,\psi)v_1=v_1$. Hence if  $e(\psi)$=0 then  $V_1={\pi}_\psi^+$.

\begin{thm} Assume that $gcd(n,p)=1$. Let ${\pi}$ be the unique genuine unitary reducible unramified principal series representation  of $\msl$. For $i=1,2$ let $v_i$ be the unique (up to normalization) $K_i$ unramified vector and let $V_i$ be the $\msl$ sub-representation of ${\pi}$ generated by $v_i$. We have
\begin{enumerate}
\item If $e(\psi)$ is even then $V_1={\pi}_\psi^+$ and $V_2={\pi}_\psi^-$.
\item If $e(\psi)$ is odd then $V_1={\pi}_\psi^-$ and $V_2={\pi}_\psi^+$.
\end{enumerate}
\end{thm}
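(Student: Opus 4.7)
My plan is to bootstrap the theorem from the base case $e(\psi)=0$. In that case the excerpt already records $\mathbb{A}(\theta,\psi)v_1=v_1$ (with $\theta=\theta_u$), which immediately gives $V_1=\pi^+_\psi$. What remains is to pin down $V_2$ in the base case and then to propagate both identifications to an arbitrary Whittaker character by a twist of $\psi$.

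Pinning down $V_2$ for a $\psi_0$ with $e(\psi_0)=0$ is the key step. Since $V_1=\pi^+_{\psi_0}$ already exhausts the one-dimensional $K_1$-fixed line of $\pi$, the complementary summand $\pi^-_{\psi_0}$ has no $K_1$-fixed vector. Fix $g\in\mgl$ projecting to $\operatorname{diag}(1,\varpi)$; since $\theta(\varpi)=-1$, Theorem \ref{swapthm} furnishes an $\msl$-isomorphism $(\pi^-_{\psi_0})^g \simeq \pi^+_{\psi_0}$, so the $K_1$-fixed subspace of $(\pi^-_{\psi_0})^g$ is one-dimensional. Unwinding the definition $\tau^g(h)=\tau(g^{-1}hg)$, the $K_1$-fixed vectors of $\tau^g$ are exactly the $K_1^g=K_2$-fixed vectors of $\tau$. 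Hence $\pi^-_{\psi_0}$ contains the $K_2$-spherical vector $v_2$, which by irreducibility forces $V_2=\pi^-_{\psi_0}$.

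For arbitrary $\psi$, I would combine \eqref{interpsi} with \eqref{changepsi} at $s=1$, together with $\theta^2=1$, to obtain $\mathbb{A}(\theta,\psi_c)=\theta(c)\mathbb{A}(\theta,\psi)$. Writing $\psi=(\psi_0)_d$ with $e(\psi_0)=0$ forces $\operatorname{val}(d)=-e(\psi)$, so $\theta(d)=(-1)^{e(\psi)}$ and therefore $\mathbb{A}(\theta,\psi)=(-1)^{e(\psi)}\mathbb{A}(\theta,\psi_0)$. Thus $\pi^\pm_\psi=\pi^\pm_{\psi_0}$ when $e(\psi)$ is even and $\pi^\pm_\psi=\pi^\mp_{\psi_0}$ when $e(\psi)$ is odd; combining with the base case gives both halves of the theorem. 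The main obstacle in the program is the middle paragraph, which must convert the abstract isomorphism of Theorem \ref{swapthm} into the concrete existence of a $K_2$-fixed vector in $\pi^-_{\psi_0}$; the rest is bookkeeping with Tate $\gamma$-factors.
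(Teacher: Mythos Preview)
Your argument is correct and follows essentially the same strategy as the paper: start from the known identification $V_1=\pi^+_{\psi_0}$ when $e(\psi_0)=0$, use Theorem \ref{swapthm} together with $K_1^g=K_2$ (for $g$ over $\operatorname{diag}(1,\varpi)$) to locate $v_2$, and then pass to an arbitrary $\psi$. The one substantive difference is in the propagation step. The paper compares $\pi^\pm_\psi$ with $\pi^\pm_{\psi_0}$ by invoking Theorem \ref{mainthm} and the Whittaker-dimension characterization of Remark \ref{mainrem}; you instead compute directly from \eqref{interpsi} and \eqref{changepsi} that $\mathbb{A}(\theta,\psi_c)=\theta(c)\,\mathbb{A}(\theta,\psi)$, whence the $\pm 1$-eigenspaces swap exactly when $\theta(c)=-1$. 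Your route is slightly more economical here (it bypasses the dimension count) and has the pleasant side effect of yielding the paper's Corollary $\mathbb{A}(\theta_u,\psi)v_i=(-1)^{e(\psi)+i-1}v_i$ immediately. The paper also orders the two halves differently (it settles $V_1$ for all $\psi$ first, then deduces $V_2$ for all $\psi$ from $v_2\in V_1^{\operatorname{diag}(1,\varpi)}$), but this is cosmetic; the underlying use of Theorem \ref{swapthm} is the same as yours.
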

\begin{proof}
Since we already know that if $e(\psi)=0$ then $V_1={\pi}_\psi^+$ and since $ e(\psi)\equiv e(\psi_c) \, (\operatorname{mod }2)$ if and only if $\theta_u(c)=1$ it follows from Theorem \ref{mainthm} and Remark \ref{mainrem} that if $e(\psi)$ is even then $V_1={\pi}_\psi^+$ and that if $e(\psi)$ is odd then $V_1={\pi}_\psi^-$. To finish the proof we note that $v_2 \in V_1^{diag(1,\varpi)}$ and that since $\theta_u(\varpi)=-1$ it now follows from Theorem \ref{swapthm} that if $e(\psi)$ is even then $V_2={\pi}_\psi^-$ and that if $e(\psi)$ is odd then $V_2={\pi}_\psi^+$.
\end{proof}
\begin{cor} $$\mathbb{A}(\theta_u,\psi)v_1=(-1)^{e(\psi)}v_1 , \quad \mathbb{A}(\theta_u,\psi)v_2=(-1)^{e(\psi)+1}v_2.$$
\end{cor}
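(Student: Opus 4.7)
The plan is to read off this corollary as an immediate consequence of the preceding theorem together with the very definition of $\pi_\psi^{\pm}$. Recall from Section \ref{mainsec} that $\pi_\psi^{+}$ and $\pi_\psi^{-}$ were defined as the $+1$ and $-1$ eigenspaces of the involution $\mathbb{A}(\theta,\psi)$ acting on $I(\theta)$. Hence any vector lying in $\pi_\psi^{\pm}$ is automatically an eigenvector of $\mathbb{A}(\theta,\psi)$ with the corresponding sign.

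First I would specialize to $\theta=\theta_u$ and apply the preceding theorem to identify, for each parity of $e(\psi)$, which of $V_1$ and $V_2$ coincides with $\pi_\psi^{+}$ and which with $\pi_\psi^{-}$. Since $v_i\in V_i$, this instantly determines the eigenvalue of $\mathbb{A}(\theta_u,\psi)$ on $v_i$. Concretely, if $e(\psi)$ is even then $v_1\in V_1=\pi_\psi^{+}$ forces $\mathbb{A}(\theta_u,\psi)v_1=v_1=(-1)^{e(\psi)}v_1$, and $v_2\in V_2=\pi_\psi^{-}$ forces $\mathbb{A}(\theta_u,\psi)v_2=-v_2=(-1)^{e(\psi)+1}v_2$; if $e(\psi)$ is odd the roles of $V_1$ and $V_2$ are swapped and the same two formulas still hold. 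Combining the two parity cases yields the claimed identities uniformly.

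No significant obstacle is expected, since the heavy lifting has already been done: the identification of $\pi_\psi^{\pm}$ as eigenspaces is built into the definition via \eqref{noreq} and the fact that $\mathbb{A}(\theta,\psi)^2=\mathrm{Id}$, and the matching of $V_i$ with $\pi_\psi^{\pm}$ is precisely the content of the preceding theorem. The only point worth being careful about is the bookkeeping of signs when $e(\psi)$ changes parity, which is handled by the single unified expression $(-1)^{e(\psi)}$ (respectively $(-1)^{e(\psi)+1}$) that absorbs both cases.
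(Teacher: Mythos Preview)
Your proposal is correct and matches the paper's approach: the corollary is stated there without proof, as it follows immediately from the preceding theorem together with the definition of $\pi_\psi^{\pm}$ as the $\pm 1$ eigenspaces of $\mathbb{A}(\theta,\psi)$. Your case-by-case verification of the sign bookkeeping is exactly the intended (one-line) argument.
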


{\bf Acknowledgment.} I would like to thank Fan Gao and Nadya Gurevich for numerous enlightening communications.

{\bf Funding.} This research was supported by the Israel Science Foundation (grant No. 1643/23).

\bibliography{dani}
\bibliographystyle{acm}
\end{document}